\newenvironment{proof}{\noindent {\bf Proof:}}{\hfill $\Box$}
\newtheorem{lemma}{Lemma}
\newtheorem{proposition}{Proposition}
\newcommand{\R}{\mathbb{R}} 
\newcommand{\N}{\mathbb{N}} 
\title{\bf Infinite-dimensional Christoffel-Darboux polynomial kernels on Hilbert spaces}
\begin{document}

\author{Didier Henrion$^{1,2}$}

\footnotetext[1]{CNRS; LAAS; Universit\'e de Toulouse, 7 avenue du colonel Roche, F-31400 Toulouse, France. }
\footnotetext[2]{Faculty of Electrical Engineering, Czech Technical University in Prague, Technick\'a 2, CZ-16626 Prague, Czechia.}

\date{Draft of \today}

\maketitle

In these notes, the Christoffel-Darboux polynomial kernel \cite{lpp22} is extended to infinite-dimensional Hilbert spaces, following the finite-dimensional treatment of \cite[Part 1]{d22}, itself inspired from \cite{p20} and \cite{lpp22}.

\section{Preliminaries}

\subsection{Separable Hilbert space}

Let $H$ be a separable real Hilbert space equipped with an inner product $\langle .,.\rangle$ and let $(e_k)_{k=1,2,\ldots}$ be a complete orthonormal system in $H$ with $e_1=1$, see e.g. \cite[Section 16.3]{rf10}. Given $n \in \N$,
consider the projection mapping
\[\begin{array}{llll}
\pi_n : & H & \to & H \\
& x & \mapsto & \sum_{k=1}^n \langle x,e_k \rangle e_k.
\end{array}\]
In particular, note that \[x = \lim_{n\to\infty} \pi_n(x) = \sum_{k=1}^{\infty} \langle x,e_k \rangle e_k.\]
Also note that
\[
|\pi_n(x)|^2 = \sum_{k=1}^{n} \langle x, e_k \rangle^2
\]
and
\begin{equation}\label{np}
|x|^2= \lim_{n\to\infty} |\pi_n(x)|^2 = \sum_{k=1}^{\infty} \langle x, e_k \rangle^2.
\end{equation}

\subsection{Polynomials}

Let $c_0(\N)$ denote the set of integer sequences with finitely many non-zero elements, i.e. if $a=(a_1,a_2,\ldots) \in c_0(\N)$ then $\text{card} \{k : a_k \neq 0\}< \infty$. Let us define the {\it monomial} of degree $a \in c_0(\N)$ as
\[
x^a := \prod_{k=1}^{\infty} {\langle x, e_k \rangle}^{a_k}.
\]
This is a product of finitely many powers of linear functionals.
{\it Polynomials} in $H$ are defined as linear combinations of monomials
\[\begin{array}{llll}
p : & H & \to & \R \\
& x & \mapsto & \sum_{a \in \text{spt}\:p} p_a x^a
\end{array}
\]
with coefficients $p_a \in \R$ indexed in the support $\text{spt}\:p \subset c_0(\N)$.
In the infinite dimensional case, the notion of degree of a polynomial is twofold.
The {\it algebraic degree} of $p(.)$ is \[d:=\max_{a \in \text{spt}\:p} \sum_k\:a_k\]
and it corresponds to the classical notion of total degree in the finite dimensional case. The {\it harmonic degree} of $p(.)$ is \[n:=\max_{a \in \text{spt}\:p} \{k \in \N: a_k \neq 0\}\]
and it corresponds to the number of variables in the finite dimensional case.

Given $d,n \in \N$, let $P_{d,n}$ denote the finite-dimensional vector space of polynomials of algebraic degree up to $d$ and harmonic degree up to $n$. Its dimension is the binomial coefficient ${n+d\choose n}$. Each polynomial $p(.) \in P_{d,n}$ can be identified with its coefficient vector $p:=(p_a)_{a \in \text{spt}\:p} \in \R^{n+d\choose n}$. For example, if $d=4$ and $n=2$, then $P_{4,2}$ has dimension  ${6\choose 2}=15$. The monomial of degree $a=(3,1,0,0, \ldots)$ is $\langle x,e_1\rangle^3\langle x,e_2\rangle$. It belongs to $P_{4,2}$ since it has algebraic degree $4$ and harmonic degree $2$.

\subsection{Compactness}

A useful characterization of compact sets in separable Hilbert spaces is as follows \cite[item 45, p.346, IV.13.42]{ds58}. 

\begin{proposition}\label{compact}
	A closed set $X \in H$ is compact if and only if for all $\epsilon \in \R$ there exists $n \in \N$ such that $\sup_{x \in X} |x|^2-|\pi_n(x)|^2 < \epsilon^2$.
\end{proposition}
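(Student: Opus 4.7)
My plan is to prove the two directions separately, using Dini's theorem for the forward implication and a total-boundedness argument via finite-dimensional approximation for the reverse.

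For the forward direction (compactness implies uniform tail decay), I would introduce the function $f_n(x) := |x|^2 - |\pi_n(x)|^2$ and verify three properties. Each $f_n$ is continuous on $H$, since $\pi_n$ is a bounded linear operator and the squared norm is continuous. The sequence is pointwise nonincreasing in $n$, because $|\pi_n(x)|^2 = \sum_{k=1}^n \langle x,e_k\rangle^2$ is nondecreasing in $n$. Finally, $f_n(x) \to 0$ pointwise by equation~\eqref{np}. Dini's theorem (monotone pointwise convergence to a continuous limit on a compact set) then upgrades this to uniform convergence on $X$, which is exactly the stated bound.

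For the reverse direction, it suffices to show $X$ is totally bounded, since $X$ is closed in the complete space $H$. Given $\epsilon>0$, use the hypothesis to pick $n$ with $\sup_{x\in X}(|x|^2-|\pi_n(x)|^2)<(\epsilon/3)^2$; by Pythagoras applied to the orthogonal decomposition $x=\pi_n(x)+(x-\pi_n(x))$, this equals $\sup_{x\in X}|x-\pi_n(x)|^2$. Then $\pi_n(X)$ lies in the $n$-dimensional subspace $\operatorname{span}(e_1,\ldots,e_n)$; because it is bounded there (see the obstacle below), it is totally bounded by finite-dimensional Heine--Borel. I would cover it by finitely many $\epsilon/3$-balls centered at $\pi_n(x_1),\ldots,\pi_n(x_N)$ with $x_i\in X$, and then a routine triangle-inequality bound $|x-x_i|\leq |x-\pi_n(x)|+|\pi_n(x)-\pi_n(x_i)|+|\pi_n(x_i)-x_i|<\epsilon$ shows that the $\epsilon$-balls around the $x_i$ cover $X$.

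The main subtlety I anticipate is boundedness of $\pi_n(X)$ in the reverse direction: the tail condition as stated does not by itself exclude an unbounded $X$ (for example $X=\{k e_1:k\in\N\}$ satisfies $|x|^2-|\pi_n(x)|^2=0$ for every $n\geq 1$ but is not compact). In practice one reads the proposition as a characterization among closed and bounded sets, since boundedness is automatic whenever $X$ is compact; alternatively one can note that once the tail condition is available with some $n_0$ and some fixed $\epsilon_0$, boundedness of $X$ is equivalent to boundedness of $\pi_{n_0}(X)$, which must be assumed or supplied separately. With that clarified, the remainder of the argument is a textbook combination of Dini and finite-dimensional total boundedness, matching the classical characterization in~\cite{ds58}.
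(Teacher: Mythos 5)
Your argument is correct, and it is worth pointing out that the paper does not actually prove Proposition \ref{compact}: it is simply quoted from \cite{ds58}, so your Dini-plus-total-boundedness proof is a self-contained substitute rather than a variant of an argument in the text. Both halves are the standard ones: Dini's theorem applied to the continuous, pointwise nonincreasing functions $f_n(x)=|x|^2-|\pi_n(x)|^2$ converging to $0$ by \eqref{np} gives the forward direction, and total boundedness of $X$ (tail uniformly small plus finite-dimensional Heine--Borel for $\pi_n(X)$, then the three-term triangle inequality) combined with closedness in the complete space $H$ gives the reverse. Your caveat about boundedness is also well taken and is a defect of the statement, not of your proof: as printed, the ``if'' direction is false, since $X=\{k e_1 : k\in\N\}$ is closed and satisfies $|x|^2-|\pi_n(x)|^2=0$ for every $n\geq 1$, yet is unbounded and hence non-compact; the criterion in \cite{ds58} is indeed a characterization of (conditional) compactness for \emph{bounded} sets, and the paper implicitly respects this, e.g.\ the proof of the ellipsoid proposition first notes that $X$ is closed and bounded before invoking Proposition \ref{compact}. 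Two minor further nits you could have added: ``for all $\epsilon\in\R$'' should read ``for all $\epsilon>0$'' (the left-hand side is nonnegative, so $\epsilon=0$ makes the condition unsatisfiable), and ``$X\in H$'' should be ``$X\subset H$''.
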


Compared to the finite-dimensional case, this enforces additional conditions on the polynomials defining compact semialgebraic sets.

\begin{proposition}
The ellipsoid
\begin{equation}\label{ellipsoid}
X=\{x \in H : \sum_{k=1}^{\infty} p_k \langle x,e_k \rangle^2 \leq 1\}
\end{equation}
is compact if the sequence  $(p_k)_{k=1,2,\ldots}$ is strictly positive and strictly increasing. For example one may choose $p_k = k$.
\end{proposition}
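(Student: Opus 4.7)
The plan is to invoke Proposition \ref{compact}, so two things must be checked: that $X$ is closed in $H$, and that the tail bound $\sup_{x\in X}\bigl(|x|^2-|\pi_n(x)|^2\bigr)$ can be made arbitrarily small by choosing $n$ large.

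For closedness, I would take a sequence $(x_m) \subset X$ converging to some $x \in H$. For every fixed $N \in \N$, the truncated quadratic form $x \mapsto \sum_{k=1}^N p_k \langle x,e_k\rangle^2$ is continuous on $H$ (each inner product is continuous), and the partial sum at $x_m$ is bounded above by the full sum, hence by $1$. Passing to the limit in $m$ yields $\sum_{k=1}^N p_k \langle x,e_k\rangle^2 \leq 1$ for every $N$, and then monotone convergence in $N$ gives $x \in X$.

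For the tail condition, I would use \eqref{np} to write
\[
|x|^2 - |\pi_n(x)|^2 \;=\; \sum_{k=n+1}^{\infty} \langle x,e_k\rangle^2.
\]
Since $(p_k)$ is strictly increasing, $p_k \geq p_{n+1}$ for all $k \geq n+1$, so for $x \in X$,
\[
\sum_{k=n+1}^{\infty} \langle x,e_k\rangle^2 \;\leq\; \frac{1}{p_{n+1}} \sum_{k=n+1}^{\infty} p_k \langle x,e_k\rangle^2 \;\leq\; \frac{1}{p_{n+1}}.
\]
Given $\epsilon > 0$, I would then pick $n$ large enough so that $1/p_{n+1} < \epsilon^2$, and Proposition \ref{compact} concludes.

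The delicate point is the last step: the bound $1/p_{n+1} \to 0$ requires $p_k \to \infty$, which the hypothesis ``strictly positive and strictly increasing'' does not literally guarantee (e.g.\ $p_k = 1 - 2^{-k}$ would leave $X$ containing the unit ball, which is non-compact in infinite dimensions). So the main obstacle is not analytic but rather a needed strengthening of the assumption: in the proof I would either read the hypothesis as ``strictly positive and diverging to $+\infty$'', or verify it directly for the stated example $p_k = k$. With that understanding, the argument above is complete.
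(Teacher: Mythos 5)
Your proof is correct and, on the key tail estimate, more direct than the paper's. The paper proceeds by Abel summation: setting $q_1=p_1$, $q_{k+1}=p_{k+1}-p_k>0$ and $s_k(x)=\sum_{l\geq k}\langle x,e_l\rangle^2$, it rewrites the defining constraint as $\sum_k q_k s_k(x)\leq 1$ and then argues that the terms of this convergent series of non-negative numbers must become small. You instead bound the tail in one line by $\sum_{k>n}\langle x,e_k\rangle^2\leq p_{n+1}^{-1}\sum_{k>n}p_k\langle x,e_k\rangle^2\leq p_{n+1}^{-1}$, which is exactly the uniform bound one extracts from the paper's rearrangement when it is completed carefully (since $s_{n+1}(x)\sum_{k\leq n+1}q_k\leq\sum_{k\leq n+1}q_k s_k(x)\leq 1$ and $\sum_{k\leq n+1}q_k=p_{n+1}$). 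Your version has two advantages: the bound is manifestly uniform over $x\in X$ --- whereas the paper's phrase ``we can find $n$ such that $q_{n+1}s_{n+1}(x)$ is small enough'' leaves the dependence of $n$ on $x$ unaddressed --- and it makes visible exactly what is needed to conclude, namely $p_n\to\infty$. Your remark about the hypothesis is well taken and applies equally to the paper's own argument: ``strictly positive and strictly increasing'' does not imply divergence, and with $p_k=1-2^{-k}$ the set $X$ contains the closed unit ball, hence all the $e_k$, and is not compact; so the hypothesis must indeed be read as (or strengthened to) $p_k\to\infty$, which the stated example $p_k=k$ satisfies. Your closedness argument via truncated quadratic forms is also more careful than the paper's bare assertion that $X$ is closed and bounded.
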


\begin{proof}
$X$ is closed and bounded since $p_k>0$ for all $k$.
For $x \in X$ arbitrary, let $c_k(x):=\langle x,e_k \rangle$ and $s_k(x):=\sum_{l=k}^{\infty} c^2_l(x)$ for all $k$. Let us use Proposition \ref{compact} by proving that for all $\epsilon \in \R$ there exists $n \in \N$ such that $\sup_{x\in X} s_{n+1}(x)  < \epsilon^2$.

Let $q_1:=p_1$ and $q_{k+1}:=p_{k+1}-p_k$, so that $q_k > 0$ for all $k$. Then
\[\begin{array}{lll}
1 & \geq & p_1 c_1(x) + p_2 c_2(x) + p_3 c_3(x) + \cdots \\
& = & q_1 c_1(x) + (q_1 + q_2) c_2(x) + (q_1 + q_2 + q_3) c_3(x) + \cdots \\
& = & q_1 (c_1(x)+c_2(x)+c_3(x)+\ldots) + q_2 (c_2(x)+c_3(x)+\ldots) + q_3 (c_3(x)+\cdots) + \cdots \\
& = & q_1 s_1(x) + q_2 s_2(x) + q_3 s_3(x) + \cdots
\end{array}\]
It follows that the sequence $(q_k s_k(x))_{k=1,2,\ldots}$ is non-negative and summing up at most to one. So for each $\epsilon$, we can find $n$ such that $q_{n+1} s_{n+1}(x)$ is small enough, and in particular smaller than $q_{n+1} \epsilon^2$.
\end{proof}

The Sobolev space of functions whose derivatives up to order $m>0$ are square integrable is a separable Hilbert space \cite[Thm. 3.6]{af03}. In this space, the squared norm of an element is $\sum_{k=1}^{\infty} k^m \langle x,e_k \rangle^2$ and unit balls are therefore precisely of the form \eqref{ellipsoid}.

\begin{proposition}
	The Hilbert cube
	\[
	X=\{x \in H : |\langle x,e_k \rangle| \leq \frac{1}{k}, \: k=1,2,\ldots´ \}
	\]
	is compact.

\end{proposition}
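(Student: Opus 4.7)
The plan is to verify the two hypotheses of Proposition \ref{compact}: first that $X$ is closed, and second that the tails $|x|^2 - |\pi_n(x)|^2$ are uniformly (in $x \in X$) arbitrarily small for $n$ sufficiently large.

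For closedness, I would write $X$ as the intersection $\bigcap_{k \geq 1} \{x \in H : |\langle x, e_k \rangle| \leq 1/k\}$. Each set in the intersection is the preimage of the closed interval $[-1/k, 1/k]$ under the continuous linear functional $x \mapsto \langle x, e_k\rangle$, hence closed. Thus $X$ is closed as an arbitrary intersection of closed sets.

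For the tail condition, the key observation is that for every $x \in X$,
\[
|x|^2 - |\pi_n(x)|^2 \;=\; \sum_{k=n+1}^{\infty} \langle x, e_k\rangle^2 \;\leq\; \sum_{k=n+1}^{\infty} \frac{1}{k^2},
\]
where the equality uses \eqref{np} and the inequality uses the defining bound on the coefficients of $x$. Since the series $\sum_{k \geq 1} 1/k^2$ converges, its tails tend to zero, so for any $\epsilon > 0$ we can pick $n$ with $\sum_{k=n+1}^{\infty} 1/k^2 < \epsilon^2$. This bound is independent of $x$, hence supplies the required uniform estimate $\sup_{x \in X} (|x|^2 - |\pi_n(x)|^2) < \epsilon^2$.

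I do not anticipate a genuine obstacle; the only mild subtlety is making sure that we are allowed to interchange the supremum and the termwise bound, which is immediate here because the dominating bound $1/k^2$ is independent of $x$. Then Proposition \ref{compact} applies directly and concludes compactness.
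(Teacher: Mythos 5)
Your argument is correct and is essentially the paper's own proof: both bound the tail $\sum_{k=n+1}^{\infty}\langle x,e_k\rangle^2$ by the $x$-independent convergent tail $\sum_{k=n+1}^{\infty}1/k^2$ and invoke Proposition \ref{compact}. Your explicit verification of closedness (as an intersection of preimages of closed intervals under continuous functionals) is a detail the paper leaves implicit, and is a welcome addition.
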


\begin{proof}
	See \cite[Item 70 p. 350]{ds58}. If $x \in X$ then $\sum_{k=n+1}^{\infty} \langle x,e_k \rangle^2 \leq \sum_{k=n+1}^{\infty} \frac{1}{k^2}$ can be made arbitrarily small for sufficient large $n$ and we can use Proposition \ref{compact}.
\end{proof}
 
Let $C(X)$ denote the space of continuous functions on $X$, and let $P(X) \subset C(X)$ denote the space of polynomials on $X$.

\begin{proposition}\label{density}
If $X$ is compact then $P(X)$ is dense in $C(X)$.
\end{proposition}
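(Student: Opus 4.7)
The plan is to invoke the classical Stone-Weierstrass theorem. Since $X$ is compact and embedded in a separable Hilbert space, it is a compact Hausdorff (indeed metrizable) topological space, which is exactly the setting in which Stone-Weierstrass applies. I would then verify its three hypotheses for $P(X)$ in turn.

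First, I would check that $P(X)$ is a subalgebra of $C(X)$. Each linear functional $x \mapsto \langle x, e_k \rangle$ is bounded by $|x|$ and is therefore continuous on $H$, so every monomial $x^a$, being a finite product of such functionals, is continuous on $H$ and restricts to a continuous function on $X$. Polynomials are finite linear combinations of monomials, so $P(X) \subset C(X)$. Closure of $P(X)$ under sums and scalar multiples is immediate from the definition, and closure under products follows from the identity $x^a \cdot x^b = x^{a+b}$ together with the fact that $a+b \in c_0(\N)$ whenever $a, b \in c_0(\N)$.

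Second, $P(X)$ contains the constants: taking $a=(0,0,\ldots)$, the empty product yields the monomial $x^0 = 1$, and arbitrary real multiples of this are still polynomials. Third, $P(X)$ separates points of $X$: if $x \neq y$ both lie in $X$, completeness of the orthonormal system $(e_k)$ forces their coordinate sequences $(\langle x, e_k \rangle)_k$ and $(\langle y, e_k \rangle)_k$ to differ at some index $k$, and the degree-one monomial $z \mapsto \langle z, e_k \rangle$ then takes different values at $x$ and $y$.

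Stone-Weierstrass then yields that $P(X)$ is uniformly dense in $C(X)$, which is the claim. I expect no significant obstacle here: the argument mirrors the finite-dimensional proof, with the only infinite-dimensional inputs being the continuity of each coordinate functional $\langle\cdot,e_k\rangle$ and the replacement of ``coordinates agree on a finite basis implies equality'' by ``coordinates agree on the complete orthonormal system implies equality'' in the point-separation step. Compactness of $X$, which is the hypothesis, is what licences the use of Stone-Weierstrass in the first place, and without it the statement would fail as polynomials need not even be bounded on $X$.
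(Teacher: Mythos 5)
Your proof is correct and follows essentially the same route as the paper: verify that $P(X)$ is a subalgebra of $C(X)$ containing the constants and separating points (the latter via the coordinate functionals $\langle\cdot,e_k\rangle$ and completeness of the orthonormal system), then apply Stone--Weierstrass on the compact set $X$. You simply spell out a few details (continuity of monomials, closure under products) that the paper leaves implicit.
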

\begin{proof}
	Observe that the set  of polynomials on $X$ is an algebra (i.e. the product of two elements of $P(X)$ is an element of $P(X)$) that separates points (i.e. for all $x_1 \neq x_2 \in X$, there is a $p \in P(X)$ such that $p(x_1) \neq p(x_2)$) and that contains constant functions (corresponding to the degree $a=0$). The 
	 result follows from the Stone-Weierstrass Theorem \cite[Section 12.3]{rf10}.
	\end{proof}

\subsection{Moments}

Measures on topological spaces are defined as countably additive non-negative functions acting on the sigma algebra, the collection of subsets which are closed under complement and countable unions \cite[Section 17.1]{rf10}.
Measures on a separable Hilbert space $H$ are uniquely determined by their actions on test functions.

\begin{proposition}\cite[Prop. 1.5]{d06}\label{match}
	Let $\mu_1$ and $\mu_2$ be measures on $H$ such that
	\[\int f(x)d\mu_1(x) = \int f(x)d\mu_2(x)\] for all continuous bounded functions $f$ on $H$. Then $\mu_1=\mu_2$.
\end{proposition}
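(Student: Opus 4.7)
The plan is to reduce equality of the Borel measures $\mu_1$ and $\mu_2$ on $H$ to their agreement on closed subsets, and then extend to the full Borel $\sigma$-algebra via a standard $\pi$-$\lambda$ argument. The bridge between continuous bounded test functions and closed sets is provided by the fact that $H$, being equipped with the norm metric, is a metric space on which distance functions are available.

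First I would fix an arbitrary closed set $F \subset H$ and introduce the distance function $\delta_F(x) := \inf_{y \in F} |x-y|$, which is $1$-Lipschitz and hence continuous on $H$. For each $n \in \N$ I set
\[
f_n(x) := \max\{0,\, 1 - n\,\delta_F(x)\},
\]
so that $f_n$ is continuous and bounded in $[0,1]$, equals $1$ on $F$, vanishes outside the open $\tfrac{1}{n}$-neighbourhood of $F$, and the sequence $(f_n)$ decreases monotonically to the indicator $\mathbf{1}_F$ as $n \to \infty$. Applying the hypothesis to each $f_n$ and invoking dominated convergence (the constant $1$ dominates the sequence and is integrable against the finite measures at hand), I get
\[
\mu_1(F) \;=\; \lim_{n \to \infty}\int f_n \,d\mu_1 \;=\; \lim_{n \to \infty}\int f_n \,d\mu_2 \;=\; \mu_2(F)
\]
for every closed $F \subset H$.

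Since closed sets form a $\pi$-system containing $H$ and generating the Borel $\sigma$-algebra $\mathcal{B}(H)$, Dynkin's $\pi$-$\lambda$ theorem concludes $\mu_1 = \mu_2$ on $\mathcal{B}(H)$. The main obstacle, albeit a mild one, is implicit in the dominated convergence step and in the uniqueness extension: both rely on finiteness of $\mu_1$ and $\mu_2$. Under the standing assumption (typical in moment-problem contexts) that the measures are finite positive Borel measures, this is automatic. For merely $\sigma$-finite measures one would exhaust $H$ by a countable family of bounded Borel sets of finite measure under both $\mu_i$ — separability of $H$ makes this possible — and apply the above argument locally before reassembling by countable additivity. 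A second, purely notational, subtlety is that one should implicitly restrict attention to the Borel $\sigma$-algebra generated by the norm topology; in a separable Hilbert space this coincides with the weak Borel $\sigma$-algebra, so no ambiguity arises.
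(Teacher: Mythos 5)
Your proof is correct: the approximation of $\mathbf{1}_F$ for closed $F$ by the Lipschitz cutoffs $f_n(x)=\max\{0,1-n\,\delta_F(x)\}$, dominated convergence for finite measures, and the $\pi$-$\lambda$ (monotone class) argument on closed sets is exactly the standard proof of this fact. The paper itself gives no proof but simply cites \cite[Prop.~1.5]{d06}, and your argument is essentially the one found there, with the finiteness caveat you raise being the only (harmless, since the measures in this paper are finite) point of attention.
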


Given a measure $\mu$ supported on a subset $X$ of $H$, and given $a \in c_0(\N)$, the {\it moment} of $\mu$ of degree $a$ is the number
\[
\int_X x^a d\mu(x).
\]
A measure on a compact set is uniquely determined by its sequence of moments.
\begin{proposition}
	Let $\mu_1$ and $\mu_2$ be measures on a compact set $X$ satisfying
	\[
	\int_X x^a d\mu_1(x) = \int_X x^a d\mu_2(x)
	\]
	for all $a \in c_0(\N)$. Then $\mu_1= \mu_2$.
\end{proposition}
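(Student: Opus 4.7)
The plan is to chain together Propositions \ref{density} and \ref{match}. Agreement of moments is agreement on monomials, which extends by linearity to agreement on all polynomials:
\[
\int_X p(x)\,d\mu_1(x) = \int_X p(x)\,d\mu_2(x) \qquad \text{for all } p \in P(X).
\]
In particular, taking $p\equiv 1$ (the monomial $x^0$) gives $\mu_1(X)=\mu_2(X)<\infty$, so both measures are finite; call this common mass $M$.

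Next I would promote this from polynomials to arbitrary continuous functions on $X$. Fix any $f \in C(X)$ and $\varepsilon>0$. By Proposition \ref{density}, since $X$ is compact, there exists $p \in P(X)$ with $\sup_{x\in X}|f(x)-p(x)| < \varepsilon$. Then for $i=1,2$,
\[
\Bigl|\int_X f\,d\mu_i - \int_X p\,d\mu_i\Bigr| \;\leq\; \varepsilon\, \mu_i(X) \;=\; \varepsilon M.
\]
Combined with $\int_X p\,d\mu_1 = \int_X p\,d\mu_2$, this gives $\bigl|\int_X f\,d\mu_1 - \int_X f\,d\mu_2\bigr| \leq 2\varepsilon M$. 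Letting $\varepsilon\to 0$ yields equality of integrals for every $f\in C(X)$.

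Finally I would apply Proposition \ref{match}. Extend $\mu_1$ and $\mu_2$ by zero outside $X$ to get measures on $H$ with support in $X$. Any continuous bounded function $g$ on $H$ restricts to an element of $C(X)$, and since the measures are supported in $X$,
\[
\int_H g\,d\mu_1 = \int_X g|_X\,d\mu_1 = \int_X g|_X\,d\mu_2 = \int_H g\,d\mu_2.
\]
Proposition \ref{match} then gives $\mu_1=\mu_2$.

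The only nontrivial ingredient is the uniform polynomial approximation on $X$, and this has already been supplied by the Stone–Weierstrass argument of Proposition \ref{density}; everything else is bookkeeping with finite total mass. The mild point worth being careful about is ensuring the measures are finite (so that uniform convergence transfers to $L^1$ convergence), which is secured automatically by testing against the constant monomial $x^0$.
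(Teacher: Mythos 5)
Your proposal is correct and follows exactly the same route as the paper's proof: linearity passes from monomials to polynomials, Proposition \ref{density} upgrades this to all of $C(X)$, and Proposition \ref{match} (with continuous functions being bounded on the compact $X$) concludes. You simply spell out the details the paper leaves implicit, in particular the finite total mass obtained from the constant monomial, which is needed to turn uniform approximation into closeness of integrals.
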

\begin{proof}
If all moments of $\mu_1$ and $\mu_2$ coincide, then 
\[
\int_X p(x) d\mu_1(x) = \int_X p(x) d\mu_2(x)
\]	
for all polynomials on $X$, and by Proposition \ref{density} for all continuous functions on $X$.
Since continuous functions are bounded on a compact set $X$, we can use Proposition \ref{match} to conclude.
\end{proof}


\section{Christoffel-Darboux polynomial}

Let $\mu$ be a given probability measure on a given compact set $X$ of $H$.
Given $d,n \in \N$, the finite-dimensional vector space $P_{d,n}$ of polynomials of algebraic degree up to $d$ and harmonic degree up to $n$ is a Hilbert space once equipped with the inner product \[\langle p,q \rangle_{\mu}:=\int p(x)q(x)d\mu(x).\] Let $b_{d,n}(.)$ denote a basis for $P_{d,n}$, of dimension $n+d\choose n$. Any element $p \in P_{d,n}$ 
can be expressed as $p(.) = p^T b_{d,n}(.)$ with $p$ a vector of coefficients.

Let
\[
M^{\mu}_{d,n} := \int_X b_{d,n}(x)b_{d,n}(x)^T d\mu(x)
\]
be the moment matrix of order $(d,n)$ of measure $\mu$, which is the Gram matrix of the inner products of pairwise entries of vector $b_{d,n}(.)$. This matrix is positive semi-definite of size $n+d\choose n$.
If it is non-singular, then it can be written as
\[
M^{\mu}_{d,n} = Q S Q^T = \sum_{i=1}^{n+d\choose n} s_i q_i q^T_i 
\]
with $q_i(x):=q^T_i b_{d,n}(x)$ and $q^T_i M^{\mu}_{d,n} q_j=s_i$ if $i=j$ and $0$ if $i \neq j$.

The Christoffel-Darboux (CD) kernel is then defined as
\[
K^{\mu}_{d,n}(x,y):= \sum_{i=1}^{n+d\choose n} s^{-1}_i q_i(x) q_i(y) = 
b^T_{d,n}(x)(M^{\mu}_{d,n})^{-1}b_{d,n}(y)
\]
and the CD polynomial is defined as the sum of squares diagonal evaluation of the kernel
\[
p^{\mu}_{d,n}(x) := K^{\mu}_{d,n}(x,x).
\]

\begin{lemma}
The vector space $P_{d,n}$ equipped with the CD kernel is a RKHS (reproducible kernel Hilbert space).
\end{lemma}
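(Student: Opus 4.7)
The plan is to verify directly the two defining properties of a reproducing kernel Hilbert space applied to the finite-dimensional inner product space $(P_{d,n},\langle\cdot,\cdot\rangle_\mu)$ with candidate kernel $K^\mu_{d,n}$.

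First I would check that $(P_{d,n},\langle\cdot,\cdot\rangle_\mu)$ is indeed a Hilbert space. Completeness is automatic since $P_{d,n}$ is finite-dimensional, so it is enough to observe that $\langle\cdot,\cdot\rangle_\mu$ is a genuine inner product, i.e.\ positive definite. Writing $p(\cdot)=p^T b_{d,n}(\cdot)$, one has $\langle p,p\rangle_\mu = p^T M^\mu_{d,n} p$, which is strictly positive for $p\neq 0$ exactly because the moment matrix $M^\mu_{d,n}$ has been assumed non-singular when defining the kernel; equivalently, the monomials spanning $P_{d,n}$ are linearly independent as functions in $L^2(\mu)$.

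Next I would show that for each fixed $y\in H$, the function $x\mapsto K^\mu_{d,n}(x,y)$ lies in $P_{d,n}$. This is immediate from the representation
\[
K^\mu_{d,n}(x,y)=b_{d,n}(x)^T (M^\mu_{d,n})^{-1} b_{d,n}(y),
\]
since for fixed $y$ the vector $c(y):=(M^\mu_{d,n})^{-1}b_{d,n}(y)\in\R^{n+d\choose n}$ gives $K^\mu_{d,n}(\cdot,y)=c(y)^T b_{d,n}(\cdot)\in P_{d,n}$.

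The heart of the proof is the reproducing identity. For any $p(\cdot)=p^T b_{d,n}(\cdot)\in P_{d,n}$ and any $y\in H$,
\[
\langle p, K^\mu_{d,n}(\cdot,y)\rangle_\mu
= \int_X p^T b_{d,n}(x)\, b_{d,n}(x)^T (M^\mu_{d,n})^{-1} b_{d,n}(y)\, d\mu(x)
= p^T M^\mu_{d,n}(M^\mu_{d,n})^{-1} b_{d,n}(y) = p(y),
\]
where the middle equality just pulls the constant vectors out of the integral and uses the definition of $M^\mu_{d,n}$. I would also note, for completeness, the symmetry $K^\mu_{d,n}(x,y)=K^\mu_{d,n}(y,x)$ and positive definiteness, both inherited from $(M^\mu_{d,n})^{-1}$ being symmetric positive definite and visible in the spectral form $\sum_i s_i^{-1} q_i(x)q_i(y)$.

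There is no real obstacle here: the argument is almost a tautology once one has isolated the identity $\int b_{d,n}(x)b_{d,n}(x)^T d\mu(x)=M^\mu_{d,n}$. The only point that deserves emphasis is that the construction tacitly requires $M^\mu_{d,n}$ to be invertible, which is both what makes $\langle\cdot,\cdot\rangle_\mu$ an inner product on $P_{d,n}$ and what allows $(M^\mu_{d,n})^{-1}$ to appear in the definition of $K^\mu_{d,n}$; it would be worth stating this assumption explicitly in the lemma.
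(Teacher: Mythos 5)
Your argument is correct and follows essentially the same route as the paper: the heart of both proofs is the identical computation showing $\langle p,K^\mu_{d,n}(\cdot,y)\rangle_\mu = p^T M^\mu_{d,n}(M^\mu_{d,n})^{-1}b_{d,n}(y)=p(y)$, with the non-singularity of the moment matrix as the tacit hypothesis. The only cosmetic difference is that you verify the Hilbert-space structure, membership of $K^\mu_{d,n}(\cdot,y)$ in $P_{d,n}$, and symmetry explicitly, while the paper instead records continuity of the evaluation functional via Cauchy--Schwarz --- a point that is automatic in finite dimensions and also follows from your reproducing identity.
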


\begin{proof}
The linear functional $p \mapsto \langle p(.), K^{\mu}_d(.,y)\rangle_{\mu}$ has the reproducing property:
\begin{align*}
\forall p \in P_{d,n}, \quad\langle p(.), K^{\mu}_{d,n}(.,y)\rangle_{\mu} & = \int_X p(x)K^{\mu}_{d,n}(x,y)d\mu(x) \\
& = \int_X p(x)b_{d,n}(x)^T (M^{\mu}_{d,n})^{-1} b_{d,n}(y) d\mu(x)\\
& = p^T \left(\int_X b_{d,n}(x) b_{d,n}(x)^T d\mu(x)\right) (M^{\mu}_{d,n})^{-1} b_{d,n}(y) \\
& = p^T b_{d,n}(y) = p(y)
\end{align*}
and it is continuous:
\[
\forall p \in P_{d,n}, \quad \langle p(.), K^{\mu}_{d,n}(.,y)\rangle_{\mu}^2 \leq \int_X p(x)^2d\mu(x) \int_X K^{\mu}_d(x,x)d\mu(x).
\]
\end{proof}

\section{Christoffel function}

The Christoffel function is defined as
\begin{equation}\label{cf}
\begin{array}{llll}
C^{\mu}_{d,n} \: : \: & H  & \to & [0,1] \\
& z  & \mapsto &  \displaystyle \min_{p \in P_{d,n}} \int_X p^2(x)d\mu(x) \quad\text{s.t.}\quad p(z)=1.
\end{array}
\end{equation}

\begin{lemma}
For each $z \in H$, the minimum is
\[
C^{\mu}_{d,n}(z) = \frac{1}{K^{\mu}_{d,n}(z,z)} = \frac{1}{p^{\mu}_{d,n}(z)}
\]
and it is achieved at
\[
p(.) = \frac{K^{\mu}_{d,n}(.,z)}{K^{\mu}_{d,n}(z,z)}.
\]
\end{lemma}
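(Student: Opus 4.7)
The plan is to leverage the RKHS structure established in the previous lemma; the reproducing property together with Cauchy--Schwarz yields both the lower bound on the minimum and identifies the optimizer as the (normalized) kernel section $K^{\mu}_{d,n}(\cdot,z)$.

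First I would fix $z \in H$ and note that the constraint set $\{p \in P_{d,n} : p(z) = 1\}$ is nonempty whenever $K^{\mu}_{d,n}(z,z) > 0$ (which holds since $M^{\mu}_{d,n}$ is assumed non-singular, making the map $p \mapsto p(z)$ a nonzero linear functional on $P_{d,n}$). Then, for an arbitrary feasible $p$, I apply the reproducing property established in the previous lemma to write
\[
1 = p(z) = \langle p(\cdot), K^{\mu}_{d,n}(\cdot,z) \rangle_{\mu}.
\]
Cauchy--Schwarz in the Hilbert space $(P_{d,n},\langle\cdot,\cdot\rangle_{\mu})$ then gives
\[
1 \leq \int_X p^2(x)\, d\mu(x) \cdot \int_X K^{\mu}_{d,n}(x,z)^2 \, d\mu(x).
\]

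Next I evaluate the norm of the kernel section. Applying the reproducing property once more, this time to the polynomial $p(\cdot) = K^{\mu}_{d,n}(\cdot,z) \in P_{d,n}$ (it lies in $P_{d,n}$ because it is a linear combination of the entries of $b_{d,n}(\cdot)$), I obtain
\[
\int_X K^{\mu}_{d,n}(x,z)^2 \, d\mu(x) = \langle K^{\mu}_{d,n}(\cdot,z), K^{\mu}_{d,n}(\cdot,z) \rangle_{\mu} = K^{\mu}_{d,n}(z,z).
\]
Combining the two displays yields $\int_X p^2(x)\,d\mu(x) \geq 1/K^{\mu}_{d,n}(z,z)$, which is the desired lower bound on the minimum in \eqref{cf}.

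Finally I verify that this bound is attained by the candidate $p^{\star}(\cdot) := K^{\mu}_{d,n}(\cdot,z)/K^{\mu}_{d,n}(z,z)$: by the reproducing property $p^{\star}(z) = K^{\mu}_{d,n}(z,z)/K^{\mu}_{d,n}(z,z) = 1$, so feasibility holds, and
\[
\int_X (p^{\star})^2(x)\,d\mu(x) = \frac{1}{K^{\mu}_{d,n}(z,z)^2}\int_X K^{\mu}_{d,n}(x,z)^2\,d\mu(x) = \frac{1}{K^{\mu}_{d,n}(z,z)},
\]
which matches the lower bound, proving optimality. I do not anticipate a serious obstacle; the only technical point to check cleanly is that $K^{\mu}_{d,n}(\cdot,z)$ indeed lies in $P_{d,n}$ so that the reproducing property applies to it, but this is immediate from the definition $K^{\mu}_{d,n}(x,z) = b^T_{d,n}(x)(M^{\mu}_{d,n})^{-1}b_{d,n}(z)$, which is a polynomial in $x$ of algebraic degree at most $d$ and harmonic degree at most $n$.
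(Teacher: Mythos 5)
Your proposal is correct and follows essentially the same route as the paper: the reproducing property plus Cauchy--Schwarz gives the lower bound $1/K^{\mu}_{d,n}(z,z)$, the identity $\int_X K^{\mu}_{d,n}(x,z)^2\,d\mu(x)=K^{\mu}_{d,n}(z,z)$ is obtained by applying the reproducing property to the kernel section, and the normalized kernel section is checked to attain the bound. Your version is marginally more explicit about feasibility and about $K^{\mu}_{d,n}(\cdot,z)\in P_{d,n}$, but the argument is the same.
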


\begin{proof}
It holds
\begin{align*}
1 = p^2(z) & = (\int_X K^{\mu}_{d,n}(x,z)p(x)d\mu(x))^2 \\
& \leq \int_X K^{\mu}_{d,n}(x,z)^2 d\mu(x)
\int_X p^2(x)d\mu(x) \\
& = K^{\mu}_{d,n}(z,z) \int_X p^2(x)d\mu(x)
\end{align*}
so
\[C^\mu_{d,n}(z) \geq \frac{1}{K^\mu_{d,n}(z,z)}.\]
Now observe that the polynomial
\[p(.):=\frac{K^{\mu}_{d,n}(.,z)}{K^{\mu}_{d,n}(z,z)} \in P_{d,n}
\]
is admissible for problem \eqref{cf}, i.e. $p(z)=1$ and hence
\[
C^{\mu}_{d,n}(z) \leq \int_X \frac{K^{\mu}_{d,n}(x,z)^2}{K^{\mu}_{d,n}(z,z)^2}d\mu(x) = \frac{1}{K^{\mu}_{d,n}(z,z)}.
\]
\end{proof}

\begin{lemma}\label{average}
The CD polynomial has average value
\[
\int_X p^{\mu}_{d,n}(x) d\mu(x) = {n+d\choose n}.
\]
\end{lemma}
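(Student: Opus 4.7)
The plan is to exploit the trace/inner-product representation of $p^{\mu}_{d,n}(x)$ and swap integration with trace. Since $p^{\mu}_{d,n}(x) = K^{\mu}_{d,n}(x,x) = b_{d,n}(x)^T (M^{\mu}_{d,n})^{-1} b_{d,n}(x)$ is a scalar, I would rewrite it as
\[
p^{\mu}_{d,n}(x) = \mathrm{tr}\bigl( b_{d,n}(x)^T (M^{\mu}_{d,n})^{-1} b_{d,n}(x)\bigr) = \mathrm{tr}\bigl((M^{\mu}_{d,n})^{-1} b_{d,n}(x) b_{d,n}(x)^T\bigr),
\]
using cyclicity of the trace.

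Next I would integrate both sides against $\mu$ on $X$. Since the trace is a finite linear combination of the entries of the matrix $(M^{\mu}_{d,n})^{-1} b_{d,n}(x) b_{d,n}(x)^T$, and $(M^{\mu}_{d,n})^{-1}$ is a constant matrix (independent of $x$), I can pull the constant matrix out of the integral and pass the integral inside the trace. This yields
\[
\int_X p^{\mu}_{d,n}(x) d\mu(x) = \mathrm{tr}\left((M^{\mu}_{d,n})^{-1} \int_X b_{d,n}(x) b_{d,n}(x)^T d\mu(x)\right) = \mathrm{tr}\bigl((M^{\mu}_{d,n})^{-1} M^{\mu}_{d,n}\bigr),
\]
by the very definition of the moment matrix $M^{\mu}_{d,n}$.

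The right-hand side is the trace of the identity on $P_{d,n}$, which has dimension $\binom{n+d}{n}$, so it equals $\binom{n+d}{n}$, as desired.

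There is essentially no obstacle; the main things to check are that the moment matrix is indeed invertible (assumed in the definition of the CD kernel), that the entries of $b_{d,n}(x)b_{d,n}(x)^T$ are $\mu$-integrable on the compact set $X$ so that swapping finite sums with the integral is legitimate, and that the dimension formula $\dim P_{d,n} = \binom{n+d}{n}$ is the one already recorded in the Preliminaries.
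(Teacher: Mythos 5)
Your proposal is correct and follows essentially the same route as the paper: write $p^{\mu}_{d,n}(x)$ as a trace, use cyclicity to move $(M^{\mu}_{d,n})^{-1}$ next to $b_{d,n}(x)b_{d,n}(x)^T$, exchange trace and integral, and conclude $\mathrm{trace}\,I_{n+d\choose n}={n+d\choose n}$. Your added remarks on invertibility of the moment matrix and integrability on the compact set $X$ are sound but not points the paper dwells on.
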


\begin{proof}
\begin{align*}
\int_X p^{\mu}_{d,n}(x) d\mu(x) & = 
\int_X b_{d,n}(x)^T(M^{\mu}_{d,n})^{-1}b_{d,n}(x)d\mu(x) \\
& = \text{trace}(M^{\mu}_{d,n})^{-1}\int_X b_{d,n}(x) b_{d,n}(x)^T d\mu(x) \\
& = \text{trace}\:I_{n+d\choose n} = {n+d\choose n}.
\end{align*}
\end{proof}

%

\section{Asymptotic properties}

Let $d\wedge n:=\min(d,n)$.

\begin{lemma}\label{point}
For all $z \in X$, it holds $\lim_{d\wedge n\to\infty} C^{\mu}_{d,n}(z) = \mu(\{z\})$.
\end{lemma}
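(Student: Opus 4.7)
The plan is to sandwich $C^{\mu}_{d,n}(z)$ between $\mu(\{z\})$ and $\mu(\{z\})+\epsilon$ for arbitrary $\epsilon>0$, via matching lower and upper bounds, and then invoke monotonicity of the Christoffel function in $(d,n)$. The lower bound is the easy half: for every admissible $p \in P_{d,n}$ with $p(z)=1$, one has
$$
\int_X p^2(x)\,d\mu(x) \;\geq\; \int_{\{z\}} p^2(x)\,d\mu(x) \;=\; p(z)^2\mu(\{z\}) \;=\; \mu(\{z\}),
$$
so $C^{\mu}_{d,n}(z) \geq \mu(\{z\})$ for every $(d,n)$. This step only uses that $\{z\}$ is measurable, which holds because it is closed in $H$.

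For the upper bound, I would start from a continuous bump approximating $\mathbf{1}_{\{z\}}$. For $\delta > 0$ set $f_{\delta}(x) := \max(0,\,1-|x-z|/\delta)$. Each $f_{\delta}$ is continuous on $X$ (the Hilbert norm is continuous and $X\subset H$ is a metric subspace), satisfies $f_{\delta}(z)=1$, $0\leq f_{\delta}\leq 1$, and $f_{\delta}\to \mathbf{1}_{\{z\}}$ pointwise as $\delta\to 0$. Dominated convergence (with dominating function $1$) then gives $\int_X f_{\delta}^2\,d\mu \to \mu(\{z\})$, so one can fix $\delta$ with $\int_X f_{\delta}^2\,d\mu \leq \mu(\{z\})+\epsilon/2$. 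Next invoke Proposition~\ref{density}: by Stone--Weierstrass density of polynomials in $C(X)$, there exists $p \in P_{d_0,n_0}$ for some finite $d_0, n_0$ with $\sup_{x\in X}|p(x)-f_{\delta}(x)|<\eta$, where $\eta>0$ is to be chosen. Since $p(z)\to f_{\delta}(z)=1$ as $\eta\to 0$, the renormalized polynomial $\tilde p := p/p(z) \in P_{d_0,n_0}$ is admissible for \eqref{cf} and satisfies $\sup_{x\in X}|\tilde p(x)-f_{\delta}(x)|=O(\eta)$; consequently $\int_X \tilde p^2\,d\mu \leq \mu(\{z\})+\epsilon$ for $\eta$ small enough. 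This gives $C^{\mu}_{d_0,n_0}(z)\leq \mu(\{z\})+\epsilon$. Because enlarging $(d,n)$ enlarges the feasible set of \eqref{cf}, the Christoffel function is non-increasing in each argument, so the bound propagates to every $(d,n)$ with $d\geq d_0$ and $n\geq n_0$, in particular whenever $d\wedge n$ is large enough. Letting $\epsilon\to 0$ yields $\limsup_{d\wedge n\to\infty} C^{\mu}_{d,n}(z) \leq \mu(\{z\})$, matching the lower bound.

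The main obstacle is the upper bound construction, and in particular the joint management of two parameters: the bump width $\delta$ (which must shrink to concentrate the $L^2$ mass of $f_{\delta}$ near $z$) and the Stone--Weierstrass error $\eta$ (which must be small enough, relative to the value $f_{\delta}(z)=1$, that normalizing by $p(z)$ does not spoil the $L^2$ estimate). Once a single finite pair $(d_0,n_0)$ is produced, monotonicity transports the estimate into the asymptotic regime $d\wedge n\to\infty$. A technical point worth double-checking is that the approximating polynomial from Proposition~\ref{density} really has finite algebraic and harmonic degree, but this is built into the definition of $P(X)$ as the algebra generated by finitely many linear functionals $\langle \cdot,e_k\rangle$.
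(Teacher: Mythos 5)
Your proof is correct, and it takes a genuinely different route from the paper's for the upper bound. The lower bound $C^{\mu}_{d,n}(z)\geq\mu(\{z\})$ and the monotonicity of $C^{\mu}_{d,n}(z)$ in $(d,n)$ coming from nested feasible sets are the same in both arguments. For the matching upper bound, the paper is constructive: it exhibits the explicit admissible polynomial $p(x)=(1-|\pi_n(x-z)|^2)^d\in P_{2d,n}$, splits the integral $\int_X p^2\,d\mu$ over a shrinking ball $B(z,d^{-1/4})$ and its complement, and controls the complement using the uniform smallness of the tails $\sum_{k>n}\langle x-z,e_k\rangle^2$ guaranteed by compactness (Proposition \ref{compact}), sending $n$ and then $d$ to infinity. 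You instead use compactness through Proposition \ref{density}: a continuous bump $f_\delta$ with $\int_X f_\delta^2\,d\mu\leq\mu(\{z\})+\epsilon/2$, a Stone--Weierstrass approximant $p$ of finite bidegree $(d_0,n_0)$, and renormalization by $p(z)\geq 1-\eta$, which yields $C^{\mu}_{d_0,n_0}(z)\leq\mu(\{z\})+\epsilon$ and then propagates by monotonicity. Your version is softer but cleaner: it avoids the two-limit bookkeeping of the explicit construction and the implicit requirement $|\pi_n(x-z)|\leq 1$ on $X$ (i.e.\ a normalization of $\mathrm{diam}\,X$) that the paper's bound $p^2\leq 1$ on the small ball silently uses; the price is that it is purely qualitative, whereas the paper's explicit polynomial could in principle yield rates in $d$ and $n$ in terms of $\mu(B(z,r))$. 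The one point you flag yourself --- that the Stone--Weierstrass approximant has finite algebraic and harmonic degree --- is indeed automatic from the definition of $P(X)$, and your two-parameter management of $\delta$ and $\eta$ is handled in the right order (fix $\delta$ first, then shrink $\eta$), so there is no gap.
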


\begin{proof}
Let $z \in X$.
First observe that $C^{\mu}_{d,n}(z)$ is bounded below and non-increasing i.e. $C^{\mu}_{d'\wedge n'}(z) \leq C^{\mu}_{d \wedge n}(z)$ whenever $d\wedge n \leq d'\wedge n'$, so $\lim_{d\wedge n\to\infty} C^{\mu}_{d,n}(z)$ exists.
If $p$ is admissible for problem \eqref{cf}, it holds
\[
\int_X p^2(x)d\mu(x) \leq p^2(z)\mu(\{z\}) = \mu(\{z\})
\] 
so $\lim_{d\wedge n\to\infty} C^{\mu}_{d,n}(z) \leq \mu(\{z\})$.
Conversely, for given $d,n \in \N$, let
\[
p(x):=(1-|\pi_n(x-z)|^2)^d \in P_{2d,n}
\]
and observe that $p(z)=1$ so that $p(.)$ is admissible for problem \eqref{cf} and
\begin{align*}
C^{\mu}_{2d+1,n}(z) \leq C^{\mu}_{2d,n}(z) & \leq \int_X p(x)^2 d\mu(x)  \\
& \leq \int_{B(z,d^{-\frac14})}d\mu(x) + \int_{X\setminus B(z,d^{-\frac14})} (1-|\pi_n(x-z)|^2)^{2d} d\mu(x)
\end{align*}
where $B(z,r):=\{x \in X : |x-z| \leq r\}$.
For all $x \in X\setminus B(z,d^{-\frac14})$, using \eqref{np} it holds
\[
|x-z|^2=\sum_{k=0}^n \langle x-z, e_k \rangle^2 + \sum_{k=n+1}^{\infty} \langle x-z, e_k \rangle^2  \geq d^{-\frac12}
\]
and hence
\[
(1-|\pi_n(x-z)|^2)^{2d} = (1-\sum_{k=0}^n \langle x-z, e_k \rangle^2)^{2d} \leq 
(1-d^{-\frac12}+\sum_{k=n+1}^{\infty} \langle x-z, e_k \rangle^2)^{2d}
\]
from which it follows that
\[
\lim_{n\to\infty} \int_{X\setminus B(z,d^{-\frac14})} (1-|\pi_n(x-z)|^2)^{2d} d\mu(x) \leq \lim_{n\to\infty} (1-d^{-\frac12}+\sum_{k=n+1}^{\infty} \langle x-z, e_k \rangle^2)^{2d} = (1-d^{-\frac12})^{2d}.
\]
Combining these asymptotic expressions we get
\[
\lim_{d\wedge n\to\infty} C^{\mu}_{d,n}(z) \leq \lim_{d\to\infty} \int_{B(z,d^{-\frac14´})}d\mu(x) + (1-d^{-\frac12})^{2d} = \mu(\{z\}).
\]
\end{proof}

If $\mu$ is absolutely continuous with respect to e.g. the Gaussian measure restricted to $X$ \cite{b98,d06}, then 
it follows from Lemmas \ref{average} and \ref{point} that the Christoffel function on $X$ decreases to zero linearly with respect to the dimension of the vector space $P_{d,n}$. Equivalently, the CD polynomial on $X$ increases linearly with respect to the dimension. This is in sharp contrast with its exponential growth outside of $X$, captured by the following result.

\begin{lemma}\label{outside}
Let $d,n \in \N$.
For all $z \in H$ such that $\min_{x \in X}|\pi_n(x-z)| \geq \delta > 0$ it holds
\[
p^{\mu}_{d,n}(z) \geq 2^{\frac{\delta}{\delta+\text{diam}\:X}d-3}
\]
where $\text{diam}\:X:=\max_{x_1,x_2 \in X} |x_1-x_2|$.
\end{lemma}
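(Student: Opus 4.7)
The plan is to lower-bound $p^{\mu}_{d,n}(z)$ by exhibiting a specific test polynomial $p \in P_{d,n}$ and invoking the variational inequality
\[p^{\mu}_{d,n}(z) \;\geq\; \frac{p(z)^{2}}{\int_X p(x)^{2} d\mu(x)},\]
which follows from the identification $p^{\mu}_{d,n}(z)=1/C^{\mu}_{d,n}(z)$ established in the previous lemma. Since $\delta \mapsto \delta/(\delta+\text{diam}\:X)$ is monotonically increasing, it suffices to prove the conclusion with $\delta$ replaced by the sharpest valid choice $D^{*} := \min_{x \in X}|\pi_n(x-z)| \geq \delta$. Setting $R := \max_{x \in X}|\pi_n(x-z)|$, the triangle inequality together with the fact that $\pi_n$ is non-expansive gives $R \leq D^{*}+\text{diam}\:X$.

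The key observation is that the quadratic $\phi(x) := |\pi_n(x-z)|^{2}$ is a polynomial of algebraic degree $2$ and harmonic degree $n$ that equals $0$ at $z$ and takes values in $[D^{*2},R^{2}]$ on $X$. I would then apply the Chebyshev polynomial $T_k$ of the first kind of degree $k := \lfloor d/2 \rfloor$ to amplify the gap:
\[p(x) := T_k\!\left(\frac{2\phi(x)-(R^{2}+D^{*2})}{R^{2}-D^{*2}}\right) \in P_{d,n}.\]
The inner affine map sends $[D^{*2},R^{2}]$ onto $[-1,1]$, so $|p(x)| \leq 1$ for $x \in X$, while at $z$ the argument equals $-a$ with $a := (R^{2}+D^{*2})/(R^{2}-D^{*2}) \geq 1$. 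The standard growth estimate $T_k(a) \geq \tfrac{1}{2}(a+\sqrt{a^{2}-1})^{k}$, combined with the algebraic identity $a+\sqrt{a^{2}-1}=(R+D^{*})/(R-D^{*})$ (which follows from $a^{2}-1 = 4R^{2}D^{*2}/(R^{2}-D^{*2})^{2}$), then yields $p^{\mu}_{d,n}(z) \geq p(z)^{2}/\int p^{2} d\mu \geq \tfrac{1}{4}\bigl((R+D^{*})/(R-D^{*})\bigr)^{2k}$.

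The remaining work is purely quantitative. From $R-D^{*} \leq \text{diam}\:X$ we get $(R+D^{*})/(R-D^{*}) \geq 1+2D^{*}/\text{diam}\:X$, and the elementary inequality $\log_{2}(1+2r) \geq r/(1+r)$ for $r \geq 0$ (verified by noting $f(r):=\log_{2}(1+2r)-r/(1+r)$ satisfies $f(0)=0$ and $f'(r) \geq 0$) gives $\log_{2}\bigl((R+D^{*})/(R-D^{*})\bigr) \geq D^{*}/(D^{*}+\text{diam}\:X) \geq \delta/(\delta+\text{diam}\:X)$. Combined with the $\tfrac{1}{4}=2^{-2}$ prefactor and the slack $2k \geq d-1$, this produces the claimed $2^{\delta d/(\delta+\text{diam}\:X)-3}$. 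The degenerate case $R=D^{*}$ is handled separately by taking $p(x):=(\phi(x)-D^{*2})^{k}$, which vanishes on $X$ but is nonzero at $z$, giving $p^{\mu}_{d,n}(z)=+\infty$. The main non-trivial step is the algebraic identity $a+\sqrt{a^{2}-1}=(R+D^{*})/(R-D^{*})$; everything else is bookkeeping of degrees and constants.
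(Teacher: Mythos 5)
Your proof is correct and rests on the same key idea as the paper's: feed a Chebyshev polynomial of the squared projected distance $|\pi_n(x-z)|^2$ into the variational characterization $p^{\mu}_{d,n}(z)=1/C^{\mu}_{d,n}(z)\geq p(z)^2/\int_X p^2\,d\mu$. The difference is in the execution. The paper normalizes the test polynomial to equal $1$ at $z$, rescales $x-z$ by $\delta+\text{diam}\:X$ so that the projected image of $X$ lands in the unit ball, and imports the annulus estimate $|q|\leq 2^{1-\delta d}$ from \cite[Lemma 6.3]{lp19}. You instead map the exact range $[D^{*2},R^2]$ of $\phi$ on $X$ affinely onto $[-1,1]$, keep $|p|\leq 1$ on $X$, and derive the growth of $T_k$ outside $[-1,1]$ from the explicit formula $T_k(a)=\tfrac12\bigl[(a+\sqrt{a^2-1})^k+(a-\sqrt{a^2-1})^k\bigr]$; this makes the argument self-contained and, before the final relaxations $D^*\geq\delta$ and $R-D^*\leq\text{diam}\:X$, actually sharper. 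The individual steps all check out: $R\leq D^*+\text{diam}\:X$ by the triangle inequality and non-expansiveness of $\pi_n$; the identity $a+\sqrt{a^2-1}=(R+D^*)/(R-D^*)$; the inequality $\log_2(1+2r)\geq r/(1+r)$; $\int_X p^2\,d\mu\leq 1$ because $\mu$ is a probability measure; and the exponent bookkeeping $2\lfloor d/2\rfloor\geq d-1$ with $\bar{\delta}\leq 1$ absorbing the loss into the $-3$. One remark on your degenerate case $R=D^*$: if $|\pi_n(\cdot-z)|$ is constant on $X$ and $d\geq 2$, then the nonzero polynomial $\phi-D^{*2}\in P_{d,n}$ has zero $\mu$-norm, so $M^{\mu}_{d,n}$ is singular and the CD polynomial is not defined by the paper's construction; under the standing invertibility assumption this case simply cannot occur (and for $d\leq 1$ the claimed bound is trivial since $p^{\mu}_{d,n}(z)\geq 1$ always, using the constant test polynomial), so your separate treatment is harmless but could be replaced by this observation.
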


\begin{proof}
Let $\delta \in (0,1)$ and let
\[
q(x):=\frac{T_d(1+\delta^2-|\pi_n(x)|^2)}{T_d(1+\delta^2)}
\]
where $T_d$ is the univariate Chebyshev polynomial of the first kind of degree $d \in \N$. This polynomial of $x \in H$ is such that 
\begin{itemize}
\item $q(0)=1$,
\item $|q(x)| \leq 1$ whenever $|\pi_n(x)| \leq 1$,
\item $|q(x)| \leq 2^{1-\delta d}$ whenever $0 < \delta \leq |\pi_n(x)| \leq 1$,
\end{itemize}
see \cite[Lemma 6.3]{lp19}. 
Now let
\[p(x):=q\left(\frac{x-z}{\delta+\text{diam}\:X}\right),
\quad \bar{\delta} := \frac{\delta}{\delta+\text{diam}\:X}
\]
so that if $z \in H$ is such that $\min_{x \in X}|\pi_n(x-z)| \geq \delta > 0$ then \[0 < \bar{\delta} \leq \left|\pi_n\left(\frac{x-z}{\delta+\text{diam}\:X}\right)\right| \leq 1\]
for all $x \in X$. Note that $p(.) \in P_{2d,n}$ and $p(z)=1$ so that $p(.)$ is admissible in problem \eqref{cf} and hence
\[
C^{\mu}_{2d,n}(z) \leq \int_X p(x)^2 d\mu(x)
\leq \int_X (2^{1-\bar{\delta}d})^2 d\mu(x) = 2^{2-2\bar{\delta}d} \leq 2^{3-2\bar{\delta}d}.
\]
Also $C^{\mu}_{2d+1,n}(z) \leq C^{\mu}_{2d,n}(z) \leq  2^{3-2\bar{\delta}d}$ and since $\bar{\delta} < 1$, it holds $C^{\mu}_{2d+1,n}(z) \leq 2^{3-\bar{\delta}(2d+1)}$, from which we conclude that $C^{\mu}_{d,n}(z) \leq 2^{3-\bar{\delta}d}$.
\end{proof}

\section*{Acknowledgements}

These notes benefited from feedback by Nicolas Augier, Francis Bach, Jean Bernard Lasserre, Edouard Pauwels, Alessandro Rudi.


\begin{thebibliography}{X}
	
\bibitem{af03}
R. A. Adams, J. J. F. Fournier. Sobolev spaces. 2nd edition, Elsevier, 2003.
	
\bibitem{b98}
V. I. Bogachev. Gaussian measures. Amer. Math. Soc., 1998

\bibitem{d06}
G. da Prato. An introduction to infinite-dimensional analysis. Springer, 2006.

\bibitem{ds58}
N. Dunford, J. T. Schwartz.  {Linear operators. Part I: general theory},
Interscience Publishers, 1958.

\bibitem{d22}
S. \DJ{}ura\v sinovi\'c. The Christoffel function for supervised learning: theory and practice. MSc Thesis, Univ. Toulouse Capitole, hal-03768886, 2022.
	
\bibitem{lp19}
J. B. Lasserre, E. Pauwels. The empirical Christoffel function with applications in data analysis. Adv. Comput. Math. 45(3):1439-1468, 2019.

\bibitem{lpp22}
J. B. Lasserre, E. Pauwels, M. Putinar.
The Christoffel-Darboux kernel for data analysis. 
Cambridge Univ. Press, 2022.

\bibitem{p20}
E. Pauwels. Online lectures on Christoffel-Darboux kernels. Marie Sk\l{}odowska Curie Network POEMA (Polynomial Optimization, Efficiency through Moments and Algebra). 17 June, 24 June and 1 July 2020.

\bibitem{rf10}
H. L. Royden, P. M. Fitzpatrick. Real analysis. 4th edition, Prentice Hall, 2010.


\end{thebibliography}
\end{document}